\newtheorem{theorem}{Theorem}
\newtheorem{example}[theorem]{Example}
\newtheorem{lemma}[theorem]{Lemma}
\def\eps{\varepsilon}
\begin{document}

\title[ACIM for non-autonomous dynamical systems]{Absolutely Continuous Invariant measures for non-autonomous dynamical systems.}
\thanks{The research of the authors was supported by NSERC grants. }
\date{\today }
\author[P. G\'ora]{Pawe\l\ G\'ora }
\address[P. G\'ora]{Department of Mathematics and Statistics, Concordia
University, 1455 de Maisonneuve Blvd. West, Montreal, Quebec H3G 1M8, Canada}
\email[P. G\'ora]{pawel.gora@concordia.ca}
\author[A. Boyarsky]{Abraham Boyarsky }
\address[A. Boyarsky]{Department of Mathematics and Statistics, Concordia
University, 1455 de Maisonneuve Blvd. West, Montreal, Quebec H3G 1M8, Canada}
\email[A. Boyarsky]{abraham.boyarsky@concordia.ca}
\author[Ch. Keefe]{Christopher Keefe}
\address[Ch. Keefe]{Department of Mathematics and Statistics, Concordia
University, 1455 de Maisonneuve Blvd. West, Montreal, Quebec H3G 1M8, Canada}
\email[Ch. Keefe]{chriskeefe3.14159@gmail.com}

\subjclass[2000]{37A05, 37E05}

\keywords{absolutely continuous invariant measures, non-autonomous systems}

\begin{abstract}
We consider the non autonomous dynamical system $\{\tau _{n}\},$ where $\tau
_{n}$ is a continuous map $X\rightarrow X,$ and $X$ is a compact metric
space. We assume that $\{\tau _{n}\}$ converges uniformly to $\tau .$ The
inheritance of chaotic properties as well as topological entropy by $\tau $
from the sequence $\{\tau _{n}\}$ has been studied in \cite{Can1, Can2, Li,Ste,Zhu}. In \cite{You} the
generalization of SRB\ measures to non-autonomous systems has been
considered. In this paper we study absolutely continuouus invariant measures
(acim) for non autonomous systems. After generalizing the Krylov-Bogoliubov 
Theorem  \cite{KB} and Straube's Theorem \cite{Str} to the non autonomous setting, we prove that
under certain conditions the limit map $\tau $ of a non autonomous sequence
of maps $\{\tau_n\}$ with acims has an acim.
\end{abstract}

\maketitle

\section{Introduction}

Autonomous systems are rare in nature. A more realistic approach to
modeling real life processes is to consider non autonomous models. In
this note we consider a sequence of maps $\{\tau _{n}\}$ on a compact metric space $%
X\rightarrow X$. We assume that $\{\tau _{n}\}$ converges uniformly to $\tau .$
Let $\tau_{(0,n)}=\tau_{n}\circ\tau_{n-2}\circ\dots\circ\tau_{1}\circ\tau_0.$
For an initial measure $\eta$ we consider the sequence $\mu_n=(\tau_{(0,n)})_* \eta$.
 Since $X$
is compact the space of probability measures on $X$ is $*$-weakly compact and hence we can assume that $\{\mu
_{n}\}$ converges to a measure $\mu .$ In this note we study conditions
under which the limit map $\tau $ preserves $\mu$. In particular we are interested in the situation when 
$\mu_n$ and $\mu$ are absolutely continuous.

The behaviour of non autonomous sequences of piecewise expanding maps was studied before.
In the paper \cite{OSY} the authors consider a family $\mathcal E$ of exact piecewise expanding maps
with uniform expanding properties and show that for any two initial densities $f_1$, $f_2$
the iterates $P_{\tau_{(0,n)}}f_1$ and $P_{\tau_{(0,n)}}f_2$ get closer to each other with exponential speed.
Using the notation of Section 2:
$$\int |P_{\tau_{(0,n)}}f_1-P_{\tau_{(0,n)}}f_2| dm \le  C(f_1,f_2) \Lambda^n,\ \ n\ge 1,$$
for some constants $C(f_1,f_2)>0$, $0< \Lambda<1$ and any sequence of maps $\tau_n\in\mathcal E$.
In this situation, in general, there is no limit map and the densities $P_{\tau_{(0,n)}}f$ do not converge.
In this note we assume the uniform convergence $\tau_n \rightrightarrows \tau $. This allows us to prove that, under some assumptions,
the densities $P_{\tau_{(0,n)}}f$ converge to a $\tau$-invariant density.

Another approach to dealing with compositions of different maps is to consider a random map. Maps from a family 
$\mathcal E=\{\tau_a\}_{a\in\mathcal A}$ are applied randomly according to a probability on $\mathcal A$, which might depend on the current position of the process.
The literature on random maps is quite rich. A recent article is \cite{BBR}. The authors study, in particular, random maps based on the set $\mathcal E$ of the Liverani-Saussol-Vaienti maps
$$\tau_a(x)=\begin{cases} x(1+2^ax^a),&\ x\in [0,1/2],\\
                          2x-1,& \ x\in (1/2,1],
\end{cases}
$$
 with parameters in $ [a_0,a_1]\subset (0, 1)$ chosen independently with
respect to a distribution $\nu$ on $ [a_0,a_1]$. These maps have indifferent fixed points which makes them non-exponentially mixing. The authors study the fibre-wise (quenched) dynamics of the system. For this point of view a skew-product approach is convenient. 

Let $(\mathcal A,\mathcal  F,  p)$  be a Borel probability space, let 
$\Omega = \mathcal A^\mathbb Z$ be equipped with the product
measure $ P := p^\mathbb Z$ and let $\sigma:\Omega\to\Omega$
 denote the $P$-preserving two-sided shift map.
Let $ (X,\mathcal B)$  be a measurable space. Suppose that $\tau_a : X\to  X$  is a family of measurable
maps defined for $ p$-almost every $ a \in\mathcal  A$ such that the skew product
$$ T : X \times \Omega\to  X \times \Omega,\  T(x,\omega)=(\tau_{[\omega]_0},\sigma\omega) ,$$
 is measurable with respect to $\mathcal B\times\mathcal F$.
If $X_\omega=X\times\{\omega\}$ denotes the fiber over $\omega$  and
 $$\tau^n_\omega=\tau_{\sigma^{n-1}\omega}\circ\dots\circ \tau_\omega:X_\omega\to X_{\sigma^n \omega},$$
 we have
$T^n(x,\omega)=(\tau^n_\omega(x), \sigma^n \omega$.
If a probability measure $\mu$ is $T$-invariant and $\pi_*\mu=P$ ($\pi$ is the projection onto $\Omega$), then there exists
a family of probability fiber measures $\mu_\omega$ on $X_\omega$ such that $\mu(A)=\int \mu_\omega(A) dP(\omega)$
for any $A\in \mathcal B\times\mathcal  F$. Since $\mu$ is $T$-invariant the measures $\{\mu_\omega\}$ form an equivariant family, i.e., $(\tau_\omega)_* \mu_\omega=\mu_{\sigma\omega}$ for almost all $\omega$.

The authors study future and past quenched correlations: given
$\phi,\psi: X\times \Omega\to \mathbb R$ the future and past fibre-wise correlations are defined as
$$Cor_{n,\omega}^{(f)}=\int(\phi\circ \tau^n_\omega)\psi d\mu_\omega- \int\phi d\mu_{\sigma^n\omega}\int\psi d\mu_{\omega},$$
$$Cor_{n,\omega}^{(p)}=\int(\phi\circ \tau^n_{\sigma^{-n}\omega})\psi d\mu_{\sigma^{-n}\omega}- \int\phi d\mu_{\omega}\int\psi d\mu_{\sigma^{-n}\omega}.$$

They prove that for  the random map based on family $\mathcal E$ there exists an equivariant family of measures $\mu_\omega$ which are absolutely continuous $P$-a.e., characterize their densities and show that both
future and past quenched correlations are of order $\mathcal O(n^{1-1/a_0}+\delta)$ for bounded $\phi$ and H\"older 
continuous $\psi$ and arbitrary $\delta>0$.
The system $(T,\mu)$ is mixing.

In this note we assume that $\tau_n\rightrightarrows \tau$ and consider the compositions 
$\tau_{(0,n)}=\tau_{n}\circ\tau_{n-2}\circ\dots\circ\tau_{1}\circ\tau_0$, so we can say that we study one fixed fiber
under very special assumptions.

In Section 2 we give the definitions and introduce the notation. In Section 3 we generalize the
Krylov-Bogoliubov Theorem \cite{KB} and Straube's Theorem \cite{Str} to the non
autonomous setting.  Section 4 is independent of the previous section.
We make stronger   assumptions on the $\tau_n$'s and establish the
existence of an acim for the limit map $\tau $ and show that any convergent subsequence of
$\{P_{\tau_{(0,n)}}f\}_{n\ge 1}$ converges to an invariant density of the limit map, where
$P_{\tau_{(0,n)}}$ is the Frobenius-Perron operator induced by $\tau_{(0,n)}$ and $f$ is a density.

\section{Notation and Definitions}

Let $(X,\rho)$ be a compact metric space.
Let $\{\tau_n\}$ be a sequence of maps $\tau_n:X\to X$ which  
converges uniformly to a continuous map $\tau$.
We shall consider the non-autonomous dynamical system defined by
$$x_{m+1}=\tau_m(x_m), \ \ \ m=0,1,2,\dots
$$
where we assume that $\tau_0$ is the identity and $x_0\in I$.

We write $$\tau_{(m,n)}=\tau_{n}\circ\tau_{n-2}\circ\dots\circ\tau_{m+1}\circ\tau_m, \ \ \ n>m .$$
In particular,
 $$\tau_{(0,n)}=\tau_{n}\circ\tau_{n-2}\circ\dots\circ\tau_{1}\circ\tau_0.$$

Let $\mathcal B(X)$ be the  $\sigma$-algebra of Borel subsets of $X$.

For a map $\tau:X\to X$ we define an operator on measures on $\mathcal B(X)$:
$$\tau_* \mu (A)=\mu(\tau^{-1}A),$$ for any measurable set $A$.

\section{Generalization of the Krylov-Bogoliubov Theorem and Straube's Theorem}

We will now prove a generalization of the Krylov-Bogoliubov Theorem:

\begin{theorem}\label{KBTh} Let $\{\tau_n\}$ be a sequence of transformations defining a nonautonomous
dynamical system on the metric compact space $X$ with a continuous limit $\tau$. We assume that the $\tau_n$'s converge uniformly to $\tau$.
Let $\eta$ be a fixed probability measure on $X$. Define the measures $\mu_n=\frac 1n \sum _{i=1}^n \nu_i$, where $\nu_i=\left(\tau_{(0,i)}\right)_*(\eta)$.  Let $\mu$ be a $*$-weak limit point of the sequence $\{\mu_n\}_{n\ge 1}$.
 Then $\mu$ is  a $\tau$-invariant measure, i.e., $\tau_*\mu=\mu$.
\end{theorem}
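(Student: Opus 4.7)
The plan is to mimic the classical Krylov--Bogoliubov argument: rather than showing directly that $\tau_*\mu = \mu$, I would show $\tau_*\mu_n - \mu_n \to 0$ in the weak-$*$ topology, and then pass to the limit along a convergent subsequence $\mu_{n_k} \to \mu$, using weak-$*$ continuity of $\tau_*$ (which holds because $\tau$ is continuous on the compact space $X$). The obstruction compared with the autonomous case is that $\tau_*\nu_i$ is \emph{not} equal to $\nu_{i+1}$; instead $\nu_{i+1} = (\tau_{i+1})_*\nu_i$, and one has to absorb the discrepancy between $\tau$ and $\tau_{i+1}$ using the hypothesis $\tau_n \rightrightarrows \tau$.

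Concretely, I would test against an arbitrary $f \in C(X)$. On one hand
\[
\int f \, d(\tau_*\mu_n) = \frac{1}{n}\sum_{i=1}^n \int f\circ \tau \, d\nu_i ,
\]
while on the other, using $\nu_{i+1} = (\tau_{i+1})_*\nu_i$,
\[
\frac{1}{n}\sum_{i=1}^n \int f \, d\nu_{i+1} = \frac{1}{n}\sum_{i=1}^n \int f\circ \tau_{i+1} \, d\nu_i .
\]
Subtracting, the difference is bounded by $\frac{1}{n}\sum_{i=1}^n \|f\circ \tau - f\circ \tau_{i+1}\|_\infty$. Since $X$ is compact, $f$ is uniformly continuous, and since $\tau_n \rightrightarrows \tau$, we have $\|f\circ \tau - f\circ \tau_{i+1}\|_\infty \to 0$ as $i \to \infty$. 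A standard Cesàro-type splitting (fix $N$ so that the tail terms are below $\varepsilon$, and bound the first $N$ terms by $2\|f\|_\infty N/n$) shows this average tends to $0$.

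Next, I would observe the telescoping identity
\[
\frac{1}{n}\sum_{i=1}^n \nu_{i+1} \;=\; \mu_n + \frac{1}{n}\bigl(\nu_{n+1} - \nu_1\bigr),
\]
and since the $\nu_i$ are probability measures, the second summand pairs against $f$ to give something of magnitude at most $2\|f\|_\infty/n \to 0$. Combining, $\int f\, d(\tau_*\mu_n) - \int f\, d\mu_n \to 0$ for every $f \in C(X)$, i.e.\ $\tau_*\mu_n - \mu_n \to 0$ weakly.

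Finally, along any weak-$*$ convergent subsequence $\mu_{n_k}\to \mu$, weak-$*$ continuity of push-forward by the continuous map $\tau$ gives $\tau_*\mu_{n_k}\to \tau_*\mu$; combined with the previous step this forces $\tau_*\mu = \mu$. The main technical point is really only the uniform-continuity/uniform-convergence estimate in the second paragraph; everything else is the standard Krylov--Bogoliubov Cesàro averaging.
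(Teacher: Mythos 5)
Your proposal is correct and follows essentially the same route as the paper: the same comparison of $\tau_*\nu_i$ with $\nu_{i+1}=(\tau_{i+1})_*\nu_i$ via uniform continuity of $f$ and uniform convergence $\tau_n\rightrightarrows\tau$, the same Ces\`aro splitting with $O(1/n)$ boundary terms from telescoping, and the same final passage to the limit using weak-$*$ continuity of $\tau_*$ for continuous $\tau$. No substantive difference.
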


\begin{proof}
 We follow the proof of the original Krylov-Bogoliubov Theorem. Let $\eta$ be a
probability measure  $X$. Then the sequence
$\mu_n=\frac 1n \sum _{i=1}^n \nu_i$, where $\nu_i=\left(\tau_{(0,i)}\right)_*(\eta)$
is a sequence of probability measures and contains
 a convergent subsequence $\mu_{n_k}$. Let $\mu=\lim_{k\to\infty}\mu_{n_k}$.
We will prove that $\tau_*\mu=\mu$. To this end it is enough to show that for any $g \in C^0(X)$, 
$\mu(g)=\tau_*\mu(g)=\mu(g\circ \tau)$. 

We  estimate the difference 
\begin{equation}\begin{split}
&\left|\mu_n(g)-\mu_n(g\circ \tau)\right|= \frac 1n \left|\sum _{i=1}^n \nu_i(g)-\sum _{i=1}^n\nu_i(g\circ\tau)\right|\\
&=\frac 1n \left|\eta(g\circ\tau_{(0,1)})+\eta(g\circ\tau_{(0,2)})+\dots+\eta(g\circ\tau_{(0,n-1)})+\eta(g\circ\tau_{(0,n)})\right.\\
&\hskip 2 cm \left.- \eta(g\circ \tau\circ\tau_{(0,1)})-\eta(g\circ \tau\circ\tau_{(0,2)})-\dots-\eta(g\circ \tau\circ\tau_{(0,n-1)})-\eta(g\circ \tau\circ\tau_{(0,n)})\right|\\
&= \frac 1n \left|\eta(g\circ\tau_{(0,1)})+\sum _{i=2}^{n}\left(\eta(g\circ\tau_{(0,i)})-\eta(g\circ \tau\circ\tau_{(0,i-1)})\right)-\eta(g\circ \tau\circ\tau_{(0,n)})\right|.
\end{split}\end{equation}
Let $\omega_g$ be the modulus of continuity of $g$, i.e.,
 $$\omega_g(\delta)=\sup_{\rho(x,y)<\delta}|g(x)-g(y)|.$$
For an arbitrary $\eps>0$ we can find a $\delta>0$ such that $\omega_g(\delta)<\eps$. Since $\tau_n\to\tau$ uniformly for this $\delta$ we can find an $N\ge 1$ such that $\sup_{x\in X}\rho(\tau_n(x),\tau(x)) < \delta$ for all $n> N$.

For  $i> N$, we have 
\begin{equation*}\begin{split}
&\left|\eta(g\circ\tau_{(0,i)})-\eta(g\circ \tau\circ\tau_{(0,i-1)})\right|
=
\left|\eta(g\circ\tau_i\circ\tau_{(0,i-1)}-g\circ \tau\circ\tau_{(0,i-1)})\right|\\
&=
\left|\eta((g\circ\tau_i-g\circ \tau)(\tau_{(0,i-1)}))\right|\le\omega_g(\delta)<\eps.
\end{split}\end{equation*}
Thus, for $n> N$, we have
$$\left|\mu_n(g)-\mu_n(g\circ \tau)\right|\le \frac 1n\left( N\cdot 2\cdot \sup |g|+ (n-N)\eps\right),$$
which becomes arbitrarily close to $\eps$ as $n\to\infty$. This shows that \newline $\mu_{n_k}(g)-\mu_{n_k}(g\circ \tau)\to 0$
as $k\to\infty$.

We have $\mu_{n_k}(g)\to\mu(g)$ and since $\tau$ is continuous $\mu_{n_k}(g\circ \tau)\to \mu(g\circ \tau)=\tau_*\mu(g)$. Thus, $\mu$ is a $\tau$-invariant measure.
\end{proof}

\textbf{Remark:} The only place where we needed the continuity of $\tau$ is the last line of the proof:
since $\tau$ is continuous $g\circ \tau$ is continuous for any continuous $g$ and then the $*$-weak convergence 
of $\mu_{n_k}$ implies $\mu_{n_k}(g\circ \tau)\to \mu(g\circ \tau)$.

Theorem \ref{KBTh} does not yield any more information about the $\tau$-invariant measure $\mu$. The next
result is a generalization of a theorem by Straube \cite{Str}, which provides a sufficient condition for $\mu$ to be absolutely continuous.

\begin{theorem}\label{STTh} Let $(X,\mathcal B, \nu)$ be a normalized measure space and let $\{\tau_n\}$ be
 a sequence of non-singular transformations defining a non-autonomous dynamical system on $X$. We do not
 assume that the limit $\tau$ is continuous.
 Assume there exists  $\delta>0$ and $0<\alpha<1$ such that
$$\nu(E)<\delta \ \Longrightarrow\ \sup_{k\ge 1}\ \nu\left(\tau^{-1}_{(0,k)}(E)\right)<\alpha,$$
for all $E\in\mathcal B$. Then there exists a $\tau$-invariant normalized measure $\mu$ which is absolutely
continuous with respect to $\nu$.
\end{theorem}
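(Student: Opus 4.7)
The plan is to adapt the proof of Theorem \ref{KBTh} by constructing $\mu$ as a Cesàro-average limit of push-forwards, and then extracting the absolutely continuous part via a Lebesgue decomposition argument. I would first set $\nu_i = (\tau_{(0,i)})_*\nu$, which is absolutely continuous with respect to $\nu$ by non-singularity of each $\tau_j$, and form the Cesàro averages $\mu_n = \frac{1}{n}\sum_{i=1}^n \nu_i$ with densities $g_n = d\mu_n/d\nu \in L^1(\nu)$. The hypothesis translates to $\mu_n(E) = \int_E g_n\,d\nu \le \alpha$ whenever $\nu(E) < \delta$, uniformly in $n$.

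Next, I would extract a weak-$*$ convergent subsequence $\mu_{n_k} \to \mu$ --- in the space of probability measures if $X$ is compact metric (the standing assumption of the paper), or in $(L^\infty(\nu))^*$ otherwise via Banach--Alaoglu, in which case a Yosida--Hewitt decomposition isolates the countably additive part of the limit. Decompose $\mu = \mu_a + \mu_s$ with respect to $\nu$, with $\mu_s$ supported on a $\nu$-null set $B$. Outer regularity covers $B$ by open sets $U$ with $\nu(U) < \delta$; weak-$*$ lower semicontinuity on open sets combined with the hypothesis gives $\mu(U) \le \liminf_k \mu_{n_k}(U) \le \alpha$, whence $\mu_s(X) = \mu_s(B) \le \mu(B) \le \alpha$. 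The key inequality $\mu_a(X) \ge 1 - \alpha > 0$ follows, so $\mu_a$ is nontrivial.

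Finally, I would show that $\mu_a$ is $\tau$-invariant. If $\mu$ itself is $\tau$-invariant (which holds when $\tau$ is continuous by the KB-type telescoping of Theorem \ref{KBTh}), then uniqueness of the Lebesgue decomposition together with non-singularity of $\tau$ (ensuring $\tau_*\mu_a \ll \nu$) forces $\tau_*\mu_a \le \mu_a$, and matching total masses gives $\tau_*\mu_a = \mu_a$; normalizing, $\mu_a/\mu_a(X)$ is the desired $\tau$-invariant absolutely continuous probability measure. The main obstacle is establishing $\tau$-invariance of the ambient limit $\mu$ in the generality allowed by the theorem, where $\tau$ need not be continuous; this calls for replacing the KB telescoping on continuous test functions by an $L^1$-density version exploiting non-singularity and uniform convergence $\tau_n \to \tau$ to control the Frobenius--Perron approximation error $\|(P_\tau - P_{\tau_{i+1}}) f_i\|_1$, for test functions in a dense subclass of $L^\infty$.
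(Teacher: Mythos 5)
Your overall architecture (Ces\`aro averages of push-forwards, extract a limit, split off the part absolutely continuous with respect to $\nu$, show it is nonzero and invariant) matches the paper's, and your argument that the singular/purely finitely additive part has mass at most $\alpha$ --- covering the $\nu$-null carrier of $\mu_s$ by an open $U$ with $\nu(U)<\delta$ and using the Portmanteau inequality $\mu(U)\le\liminf_k\mu_{n_k}(U)\le\alpha$ --- is a correct and even quantitatively sharper alternative to the paper's route via Lemma \ref{Lem0} (though it uses regularity of $\nu$ and the topology of $X$, which the paper's functional-analytic version avoids). Likewise, your observation that $\tau_*\mu_a\ll\nu$ plus uniqueness of the Lebesgue decomposition and equality of total masses would force $\tau_*\mu_a=\mu_a$ is a clean substitute for the paper's Lemma \ref{Lem3} --- \emph{provided} one already knows $\tau_*\mu=\mu$.

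That proviso is exactly where the proof is incomplete, and you say so yourself: you establish invariance only under the additional assumption that $\tau$ is continuous, whereas Theorem \ref{STTh} explicitly drops that hypothesis. The telescoping estimate of Theorem \ref{KBTh} gives $\mu_{n_k}(g)-\mu_{n_k}(g\circ\tau)\to0$ for continuous $g$ without using continuity of $\tau$; the only place continuity enters is in passing $\mu_{n_k}(g\circ\tau)\to\mu(g\circ\tau)$ through a \emph{narrow} limit, since $g\circ\tau$ need not be continuous. The paper's resolution --- which your sketch of an ``$L^1$-density version'' does not supply --- is to take the limit in the weak-$*$ topology of $(L^\infty(\nu))^*$ from the outset: since all the $\mu_n$ are absolutely continuous with respect to $\nu$, the subsequential limit $z$ satisfies $\mu_{n_k}(h)\to z(h)$ for \emph{every} bounded measurable $h$, in particular for $h=g\circ\tau$, so the telescoping estimate yields $z(g)-z(\tau^{-1}g)=0$ for all continuous $g$ with no continuity assumption on $\tau$. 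The price is that $z$ is only finitely additive; the Yosida--Hewitt decomposition $z=z_c+z_p$ (Lemma \ref{Lem1}), the hypothesis of the theorem (to force $z_c\neq0$), and Lemma \ref{Lem3} (to conclude that the countably additive part of $z-z\circ\tau^{-1}$ vanishes) then combine to show $z_c$ is a nonzero, $\nu$-absolutely continuous, $\tau$-invariant measure. You mention this finitely additive route only as a parenthetical alternative for extracting the limit, but it is in fact the essential device that closes the gap you identified; without it (or some genuine replacement), your proof establishes the theorem only for continuous $\tau$.
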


(The proof uses a number of facts from the theory of finitely additive measures which are collected in the Appendix.
The proof is similar to the proof in \cite{Str} but is modified to allow the use of the estimates from the proof of Theorem \ref{KBTh}.)

\begin{proof}
Let us define the measures $$\nu_n(E)=\frac 1n \sum_{k=0}^{n-1}\nu(\tau^{-1}_{(0,k)}(E))\ ,\ E\in \mathcal B.$$
Then, for all $n$,

(a) $ \nu_n(X)=1$;

(b) $\nu_n\ll \nu$ \ ($\tau_n$ is non-singular for every $n$);

(c) $\nu_n(\cdot)\ge 0$.

Thus, $\{\nu_n\}$ is a sequence of positive, normalized, absolutely continuous measures and can be treated
as a sequence in the unit ball of $L^*_\infty(X)$ with the $*$-weak topology. Thus, it contains a convergent 
subsequence $\nu_{n_k}\to z$ and $z$ can be identified with a finitely additive measure on $X$.
The measure $z$ is finitely additive, positive, normalized and absolutely continuous with respect to $\nu$.

By Lemma  \ref{Lem1} in the Appendix we can uniquely decompose $z$ into
$$z = z_c + z_p,$$
where $z_c$ is countably additive and $z_p$ is purely finitely additive. Now, we claim that $z_c \not= 0$.  Otherwise, by Lemma \ref{Lem0}, there exists a
decreasing sequence $\{E_n\}\subset\mathcal B$ such that $\lim_{n\to\infty}\nu(E_n) = 0$
and $z(E_n) = z(X)=1$ for all $n\ge 1$.
Since $\nu(E_n)\to 0$, for any $\delta > 0$, there exists an $n_0$ such that $n > n_0 \Longrightarrow \nu(E_n) < \delta$. 
Now, by our assumptions, there is an $\alpha < 1$ such that,
$$\sup_k \nu(\tau^{-1}_{(0,k)}(E_n))<\alpha< 1.$$
Thus, 
$\nu(\tau^{-1}_{(0,k)}(E_n)<\alpha $ for all $k$. So,
$$z(E_n) < \alpha < 1,$$
which is a contradiction. We have demonstrated that $z_c \not= 0$.

 Now we will prove that $z_c$ is $\tau$-invariant.
Consider the finitely additive measure 
$$\kappa=z-z\circ\tau^{-1}=z_c-z_c\circ\tau^{-1}+z_p-z_p\circ\tau^{-1}.$$
In the proof of   Theorem \ref{KBTh} we showed that for any continuous function
$g$ on $X$ we have
 $$ \mu_{n_k}(g)-\mu_{n_k}(\tau^{-1}(g))\to 0\ ,\ k\to\infty.$$
This means that for any  continuous function
$g$  (which is bounded since $X$ is compact) we have $$\kappa(g)=z(g)-z\circ\tau^{-1}(g)=0.$$
We do not need continuity of $\tau$ here as $ \mu_{n_k}(h)\to z(h)$ for all bounded $h$.
By Lemma \ref{Lem3} in the Appendix the countably additive component of $\kappa$ is 0, which means
$$z_c-z_c\circ\tau^{-1}=0,$$
or that $z_c$ is $\tau$-invariant.
\end{proof}

%%%%%%%%%%%%%%%%%%%%%%EXAMPLE%%%%%%%%%%%%%%%%%%%%%%%%%%%%%%%%%%%%%%%%%%%%%%%%%%%%%%%%%%%%%%%%%%%%%%%%%%%%%%%%%%%%%%%%%%%

In the following example we show that, unlike in the case of one transformation, the converse implication in Theorem \ref{STTh} may not hold. 
We will construct a  sequence of maps $\tau_n\to\tau$, such that $\tau$ admits an acim and  
\begin{equation}\label{cond}\forall_{ \ \delta>0}\ \exists _{\ E\in\mathcal B}\ \ \sup_{k\ge 1}\ \nu\left(\tau^{-1}_{(2,k)}(E)\right)=1.\end{equation}

\begin{example} Let us consider maps $\tau_n:[0,1]\to [0,1]$, $n=2,3,\dots$, defined as follows
$$\tau_n(x)=\begin{cases} (1-\frac 1n)x, &\ \text{for}\ x\in [0,\frac 12);\\
                           2x-1,  &\ \text{for}\ x\in [\frac 12,1].
\end{cases}
$$
The limit map $\tau(x)=x\chi_{[0,\frac 12)}(x)+(2x+1)\chi_{ [\frac 12,1]}(x)$ admits an acim and condition (\ref{cond}) holds.
\end{example}
\def\rh{\varrho}
\begin{proof} Let $\rho_n={\tau_n}_{|[0,\frac 12)}$ be the first branch of $\tau_n$. The slope of $\rho_n=\frac{n-1}n$ so the slope
of $\rho_{m,n}=\rho_n\circ\rho_{n-1}\circ\rho_{n-2}\circ\dots\circ\rho_m$, $n>m$, is $\frac{n-1}{n}\cdot\frac{n-2}{n-1}\cdot\frac{n-3}{n-2}\cdot\dots\cdot\frac{m-1}{m}=\frac mn<1$.
Then, the interval $\rho_{m,n}^{-1}([0,\delta])$ is the interval from 0 to the minimum of $\delta\cdot\frac nm$ and $\frac 12$.
Note, that for any $k$, we have \begin{equation}\label{rho}\rho_k^{-1}([0,\frac 12])=[0,\frac 12].\end{equation}

Letting $\rh=\rh_n={\tau_n}_{|[\frac 12,1]}$ be the second branch of $\tau_n$, we have
\begin{equation*}\begin{split} \rh^{-1}\left(\left[0,\frac 12\right]\right)&=\left[\frac 12,\frac 12+\frac 14\right];\\
\rh^{-1}\left(\left[\frac 12,\frac 12+\frac 14\right]\right)&=\left[\frac 12+\frac 14,\frac 12+\frac 14+\frac 18\right];\\
&\vdots\\
\rh^{-1}\left(\left[\sum_{i=1}^k \frac 1{2^i},\sum_{i=1}^{k+1} \frac 1{2^i} \right]\right)&=\left[\sum_{i=1}^{k+1} \frac 1{2^i},\sum_{i=1}^{k+2} \frac 1{2^i}\right].
\end{split}
\end{equation*}
This and (\ref{rho}) imply that $$\tau_{(2,m-1)}^{-1}([0,\frac 12])=\left[0,\sum_{i=1}^{m-1} \frac 1{2^i} \right].$$

Let $\eps>0$ and $m$ such that $1-\sum_{i=1}^{m-1} \frac 1{2^i}<\eps$. Let $n$ satisfy $\delta\cdot\frac nm>\frac 12$. Then the Lebesgue measure of $\tau_{2,n}^{-1}([0,\delta])$ is larger than
$1-\eps$.
\end{proof}

%%%%%%%%%%%%%%%%%%%%%%%%%%%%%%%%%%%%%%%%%%%%%%%%%%%%%%%%%%%%%%%%%%%%%%%%%%%%%%%%%%%%%%%%%%%%%%%%%%%%%%%%%%%%%%%%%%%%%%%%%%%%

\section{Existence of an absolutely continuous invariant measure for the limit map}

In this section we will assume that all the maps $\tau_n$ are piecewise expanding maps of an interval.
For  the general theory of such maps we refer the reader  to \cite{BG} or \cite{LM}.

Let $I=[0,1]$. The map $\tau:I\to I$ is called piecewise expanding iff
 there exists a partition ${\mathcal{P}}
=\{I_{i}:=[a_{i-1},a_{i}],i=1,\dots ,q\}$ of $I$ such that ${\tau }:{I}%
\rightarrow {I}$ satisfies the following conditions:

 (i) $\tau $ is monotonic on each interval $I_{i}$;

 (ii) $\tau _{i}:=\tau |_{I_{i}}$ is $C^{2}$, i.e., $C^{2}$ in the interior and the one-sided limits of the derivatives are finite at endpoints;

 (iii) $|\tau _{i}^{\prime }(x)|\geq s_i\ge s >1$ for any $i$ and for all $x\in
(a_{i-1},a_{i})$.

The following Frobenius-Perron operator $P_\tau:L^1(I,m)\to L^1(I,m)$, where $m$ is Lebesgue measure, is a basic tool in the theory of piecewise expanding maps.
For a general non-singular map $\tau$ $\left[m(A)=0\ \Longrightarrow\ m(\tau^{-1}(A)=0\right]$, we define $P_\tau f$ as a Radon-Nikodym derivative $\frac {d(\tau_*m)}{dm}$.
 For piecewise expanding maps the operator can be written explicitly \cite{BG}:
$$P_\tau f (x)=\sum_{i=1}^q\frac{f(\tau_i^{-1}(x))}{|\tau'(\tau_i^{-1}(x))|}.$$
In particular $P_\tau f=f$ iff $f\cdot m$ is an acim of $\tau$.
Piecewise expanding maps of the interval satisfy the following Lasota-Yorke inequality \cite{LY}.
For any bounded variation function $f\in BV(I)$ the variation $V(P_\tau f)$ satisfies
$$V(P_\tau f)\le A V(f) + B\int_I |f| dm ,$$
where the constants $A=\frac 2 s$, $B=\frac {\max |\tau''|}{s}+\frac 2 h$ and $h=\min_i \{m(I_i)\}$. In particular, we can assume that $A<1$, considering an iterate $\tau^k$, if necessary.
We always assume that bounded variation functions are modified to satisfy $f(x_0)=\limsup_{x\to x_0} f(x)$ for all $x_0\in I$. 

We will prove the following:

\begin{theorem}\label{Th:acim} Assume that  $\tau_n$, $n=1,2,\dots$ are piecewise expanding maps of an interval
 and  satisfy the Lasota-Yorke inequality with common constants
$A<1$ and $B$. Then, for any density $f\in BV(I)$, the sequence $f_n=\frac 1n \sum _{i=1}^{n} P_{\tau_{(1,i)}}f$ 
forms a precompact set in $L^1$ and  any convergent subsequence converges to a density of an acim of the limit map $\tau$.
\end{theorem}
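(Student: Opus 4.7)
The plan is to combine a uniform bounded-variation bound with the generalized Krylov-Bogoliubov Theorem \ref{KBTh}. First I would show that the whole orbit $\{P_{\tau_{(1,i)}}f\}_{i\ge 1}$ is uniformly bounded in $BV(I)$, which gives precompactness of $\{f_n\}$ in $L^1$; then I would identify any $L^1$-limit of $f_n$ as an invariant density via Theorem \ref{KBTh}.

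For the uniform bound, observe that $P_{\tau_{(1,i)}}=P_{\tau_i}\circ\cdots\circ P_{\tau_1}$, and each $P_{\tau_j}$ preserves the $L^1$-norm of non-negative functions, so $\|P_{\tau_{(1,i)}}f\|_1=\|f\|_1=1$ for every $i$. Applying the common Lasota-Yorke inequality at each stage yields
\[
V(P_{\tau_{(1,i)}}f)\le A\,V(P_{\tau_{(1,i-1)}}f)+B\|P_{\tau_{(1,i-1)}}f\|_1= A\,V(P_{\tau_{(1,i-1)}}f)+B,
\]
and iterating gives
\[
V(P_{\tau_{(1,i)}}f)\le A^iV(f)+\frac{B}{1-A}\le V(f)+\frac{B}{1-A},
\]
uniformly in $i$. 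Averaging then yields $V(f_n)\le V(f)+B/(1-A)$ for all $n$, while $\|f_n\|_1=1$. By Helly's selection theorem (equivalently, the compact embedding $BV(I)\hookrightarrow L^1(I,m)$), the sequence $\{f_n\}$ is precompact in $L^1$, and any $L^1$-limit $f^{*}$ of a subsequence satisfies $f^{*}\ge 0$ a.e.\ and $\|f^{*}\|_1=1$.

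It remains to show that any such limit $f^{*}$ is a density of a $\tau$-invariant measure. Set $\eta=f\cdot m$. Since $\tau_0$ is the identity, $\tau_{(0,i)}=\tau_{(1,i)}$, so $\mu_n:=f_n\cdot m$ coincides with $\tfrac1n\sum_{i=1}^n(\tau_{(0,i)})_{*}\eta$, the Ces\`aro averages appearing in Theorem \ref{KBTh}. If $f_{n_k}\to f^{*}$ in $L^1$, then $\mu_{n_k}\to\mu^{*}:=f^{*}\cdot m$ in total variation and hence $*$-weakly, so Theorem \ref{KBTh} applies to conclude $\tau_{*}\mu^{*}=\mu^{*}$; since $\mu^{*}\ll m$, its density $f^{*}$ is an invariant density for $\tau$, equivalently $P_\tau f^{*}=f^{*}$.

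The main obstacle is the uniform $BV$-bound on the iterates $P_{\tau_{(1,i)}}f$. Its derivation exploits two crucial features of the hypothesis: the \emph{common} constant $A<1$ (so that the geometric series $\sum A^j$ sums to $1/(1-A)$ independently of $i$) and the $L^1$-isometry of each $P_{\tau_j}$ on densities (so that the constant term stays equal to $B$ rather than growing with $i$). Once this bound is in hand, precompactness is immediate from Helly, and the link to Theorem \ref{KBTh} makes the identification of the limit as an acim density of $\tau$ essentially free.
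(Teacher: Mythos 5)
Your precompactness argument (uniform Lasota--Yorke bound on $V(P_{\tau_{(1,i)}}f)$, averaging, Helly) is exactly the paper's, and your observation that $f_n\cdot m$ is precisely the Ces\`aro sequence $\mu_n=\frac1n\sum_{i=1}^n(\tau_{(0,i)})_*\eta$ with $\eta=f\cdot m$ is correct. For the identification of the limit, however, you take a genuinely different route from the paper: you work on the measure side and appeal to Theorem \ref{KBTh}, whereas the paper works on the density side, first proving (via Lemma \ref{Le:acim}, Lusin's theorem and the preimage estimate $m(\tau^{-1}(J))\le \frac{q_u}{s_u}m(J)$) that $P_{\tau_n}F-P_\tau F\to 0$ weakly in $L^1$ uniformly on precompact sets, and then running a telescoping argument on $P_\tau f_n-f_n$ tested against arbitrary $g\in L^\infty$ to get $P_\tau f^*=f^*$ directly.

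The gap in your version is the appeal to Theorem \ref{KBTh} itself: that theorem assumes the limit map $\tau$ is \emph{continuous}, and here it is not --- $\tau$ is only a uniform limit of piecewise expanding (hence generally discontinuous) maps, and Lemma \ref{Le:acim} only establishes that it is piecewise monotonic and non-singular. So you cannot simply say ``$\mu_{n_k}\to\mu^*$ $*$-weakly, hence Theorem \ref{KBTh} applies.'' The argument can be repaired, but you must do it explicitly: (i) the telescoping estimate in the proof of Theorem \ref{KBTh} giving $\mu_{n_k}(g)-\mu_{n_k}(g\circ\tau)\to 0$ uses only the uniform continuity of $g$ and $\tau_n\rightrightarrows\tau$, not continuity of $\tau$; (ii) continuity of $\tau$ enters only in the passage $\mu_{n_k}(g\circ\tau)\to\mu^*(g\circ\tau)$, and your total-variation convergence $\|f_{n_k}-f^*\|_1\to0$ supplies this for the merely bounded measurable function $g\circ\tau$ (this is exactly the point of the Remark after Theorem \ref{KBTh} and of the corresponding step in the proof of Theorem \ref{STTh}); (iii) you then still need to pass from $\mu^*(g)=\mu^*(g\circ\tau)$ for all $g\in C^0(I)$ to $\tau_*\mu^*=\mu^*$ as Borel measures, which requires the Riesz/regularity argument that two finite Borel measures on a compact metric space agreeing on $C^0$ coincide. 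With these three points spelled out your route is valid and in fact shorter than the paper's, at the cost of not producing the auxiliary weak-$L^1$ convergence $P_{\tau_n}F\to P_\tau F$ that the paper establishes along the way.
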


\textbf{Remark:} We do not assume that the maps $\tau_n$ are defined on a common partition. We assume that they all satisfy Lasota-Yorke inequality with the same constant $B$. In the following lemma we  show that this implies that the limit map $\tau$ is defined on a finite partition and the partitions for maps $\tau_n$ are ``asymptotically"
the same as the partition for $\tau$.

\begin{lemma}\label{Le:acim} Under the assumptions of Theorem \ref{Th:acim} the limit map $\tau$ is piecewise monotonic and there exists a constant
  $K$ such that for any interval $J$ we have $m(\tau^{-1}(J))\le K m(J)$. In particular, it follows that the limit map $\tau$ is non-singular.
\end{lemma}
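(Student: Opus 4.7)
The plan is to: (a) use the common Lasota--Yorke constant $B$ to bound the number of monotonic branches of each $\tau_n$ uniformly; (b) pass to a convergent subsequence of partition endpoints to obtain a candidate partition for $\tau$, and exhibit monotonicity of $\tau$ on each piece using that a uniform limit of monotonic functions is monotonic; (c) derive the interval preimage bound for $\tau_n$ from the uniform expansion rate, then transfer it to $\tau$ by an $\varepsilon$-enlargement and uniform convergence; (d) deduce non-singularity from the interval bound by a standard covering argument.

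For (a) and (b): the natural form of the Lasota--Yorke constant, $B=\max|\tau_n''|/s+2/h_n$ with $h_n=\min_i m(I_i^{(n)})$, forces $h_n\ge 2/B$, so each $\tau_n$ has at most $N:=\lceil B/2\rceil$ monotonic branches. Labeling the pieces $I_i^{(n)}=[a_{i-1}^{(n)},a_i^{(n)}]$, $i=1,\dots,q_n\le N$, I will pass first to a subsequence on which $q_n$ is constantly some $q$, and then to further subsequences along which each $a_i^{(n_k)}$ converges to some $a_i$. The limit $\tau$, being the uniform limit of functions monotonic on $[a_{i-1}^{(n_k)},a_i^{(n_k)}]$, is monotonic on each nondegenerate $[a_{i-1},a_i]$; after collapsing degenerate pieces, this yields a finite partition exhibiting $\tau$ as piecewise monotonic with at most $N$ branches.

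For (c): on each of its $\le N$ branches $|\tau_n'|\ge s>1$, a common lower bound coming from $A=2/s<1$, so $\tau_n^{-1}(J)$ is a union of at most $N$ intervals of length $\le m(J)/s$, giving $m(\tau_n^{-1}(J))\le (N/s)\,m(J)$. For the limit, I fix an interval $J$ and $\varepsilon>0$, let $J_\varepsilon$ be the $\varepsilon$-enlargement of $J$ in $I$, and choose $n_0$ so that $\|\tau_n-\tau\|_\infty<\varepsilon$ for $n>n_0$. Then $\tau(x)\in J$ implies $\tau_n(x)\in J_\varepsilon$, so $\tau^{-1}(J)\subseteq \tau_n^{-1}(J_\varepsilon)$ and
\begin{equation*}
m(\tau^{-1}(J))\le \frac{N}{s}\,m(J_\varepsilon)\le \frac{N}{s}\bigl(m(J)+2\varepsilon\bigr).
\end{equation*}
Letting $\varepsilon\to 0$ yields $m(\tau^{-1}(J))\le K\,m(J)$ with $K:=N/s$. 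Non-singularity follows by covering any Lebesgue null set with countably many open intervals of arbitrarily small total length and propagating the estimate by countable subadditivity.

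The hardest step, I expect, will be (a): extracting a uniform bound on the number of monotonic branches solely from the hypothesis that the $\tau_n$ satisfy a Lasota--Yorke inequality with a common $B$. The bound is immediate if $B$ is interpreted as the natural constant appearing in the proof of \cite{LY} (which contains the $2/h$ summand), but an \emph{abstract} $B$ that merely validates the inequality need not agree with the derived one. A robust proof may require testing the inequality against sharp indicator functions $\chi_{[a,a+\eta]}$ localized near candidate partition endpoints to force $h_n$ uniformly bounded below; this technical point is the place where I would expect the authors to spend the most care.
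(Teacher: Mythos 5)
Your proposal is correct and rests on the same two pillars as the paper's proof: the common constant $B$ forces a uniform bound on the number of monotonic branches of the $\tau_n$ (the paper calls it $q_u$, you call it $N$), and the common $A<1$ forces a uniform lower bound on $|\tau_n'|$. Where you differ is in the two technical substeps. For piecewise monotonicity of $\tau$, the paper argues by contradiction with a ``zigzag'' count: if the graph of $\tau$ contained more than $2q_u$ alternating points, uniform convergence would force the same zigzag into the graph of $\tau_n$ for large $n$, contradicting the branch bound; you instead extract convergent subsequences of the partition endpoints and use that a uniform limit of monotone functions is monotone. Both work; the paper's argument avoids any bookkeeping with degenerate limit intervals, while yours produces an explicit candidate partition for $\tau$. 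For the preimage estimate, the paper works directly on each monotonic branch of $\tau$, writing $b-a=\lim_n(\tau_n(x_2)-\tau_n(x_1))\ge s_u(x_2-x_1)$, whereas you first prove $m(\tau_n^{-1}(J))\le (N/s)\,m(J)$ and transfer it to $\tau$ via the inclusion $\tau^{-1}(J)\subseteq\tau_n^{-1}(J_\varepsilon)$ and $\varepsilon\to 0$; your route is arguably cleaner and avoids having to identify the intersection points $x_1,x_2$ on each branch. Finally, the caveat you raise at the end --- that an \emph{abstract} $B$ validating the inequality need not control $1/h_n$ --- is a genuine ambiguity, but the paper's own proof makes exactly the same interpretive leap (``since the constant $B$ depends on the reciprocal of $h$\dots''), so you are not missing anything the authors supply.
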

\begin{proof} Since the constant $B$ depends on the reciprocal of $h$, there is a universal bound $q_u$ on the number of elements of the partition $\mathcal P$ for 
$\tau_n$. This places a restriction on the number $k$ of iterates we can use to make $A<1$. Thus, there exists a universal lower bound $s_u$ for the modulus of the derivative 
$\tau_n'$.

Now, we prove that $\tau$ is piecewise monotonic. Assume that the graph of $\tau$ contains $p$ points forming a ``zigzag", i.e., there exist  $x_1<x_2<x_3<\dots<x_{p-1}<x_p$
such that $\tau(x_i)<\tau(x_{i+1})$ for odd $i$ and $\tau(x_i)>\tau(x_{i+1})$ for even $i$ (or other way around). Then, $p\le 2q_u$. If not, then since $\tau_n\rightrightarrows \tau$ uniformly,
for large $n$ the graph of $\tau_n$ also contains a zigzag of length $p$. This is impossible as $\tau_n$ has at most $q_u$ branches of monotonicity.
Thus, $\tau$ is piecewise monotonic with at most $q_u$ branches of monotonicity.

Let $[a,b]\subset I$ be an interval. Each line $y=a$, $y=b$ intersects the graph of $\tau$ in at most $q_u$ points. Let points $(x_1,a)$, $(x_2,b)$ be the points of intersection
of these lines with one monotonic, say increasing, branch of $\tau$. Then,
$$b-a=\lim_{n\to\infty} \tau_n(x_2)-\tau_n(x_1)\ge \lim_{n\to\infty} s_u\cdot (x_2-x_1)=s_u\cdot(x_2-x_1).$$ If one (or two) of  the intersections is empty, we replace appropriate $x_i$ by the endpoint of 
the interval of monotonicity.  Thus, for any interval $J$ we have 
\begin{equation}\label{preimage_est}
m(\tau^{-1}(J))\le \frac{q_u}{s_u} m(J). 
\end{equation}
\end{proof}

We can now prove Theorem \ref{Th:acim}.
\begin{proof} [Proof of Theorem \ref{Th:acim}] Since $f$ is a density and the Frobenius-Perron operator preserves the integral of positive functions, we have $\int|P_{\tau_n}f|dm =1$ for all $n\ge 1$.
 Since $P_{\tau_{(1,i)}}=  P_{\tau_i}\circ P_{\tau_{i-1}}\circ\dots\circ P_{\tau_{2}}\circ P_{\tau_{1}}$, we can apply the Lasota-Yorke inequality consecutively and obtain
$$V(P_{\tau_{(1,i)}}f)\le A^i V(f)+B(A^{i-1}+A^{i-2}+\dots +A^2+A+1)\le A^iV(f)+\frac B{1-A}\ ,\ i\ge 1 .$$
Thus, the functions $P_{\tau_{(1,i)}}f$ and also the functions $f_n$, $i,n\ge 1$, have uniformly bounded variation. Since for a bounded variation density $f$, $\sup_{x\in I} f(x)\le 1+V(f)$,
these functions are also uniformly bounded. The sequence $\{f_n\}_{n\ge 1}$, being both uniformly bounded 
 and of uniformly bounded variation contains a  subsequence  $\{f_{n_k}\}_{k\ge 1}$ convergent almost everywhere to a function $f^*$ of bounded variation by Helly's Theorem \cite{Nat}.
Additionally, by the Lebesgue Dominated Convergence Theorem, $\int _I f^* dm =1$.
This means that, by Scheffe's Theorem \cite{Bil},  $f_{n_k}\to f^*$  in the $L^1$-norm.
Thus, the sequence $\{f_n\}_{n\ge 1}$ forms a pre-compact set in $L^1$ and in particular, contains a subsequence convergent in $L^1$ to a function of bounded variation.

Now, we will prove that  for any density $F$, $(P_{\tau_n}F-P_\tau F)\to 0$ weakly in $L^1$, as $n\to\infty$.
Let $g\in L^\infty(I,m)$ be an arbitrary bounded function and let us fix an $\eps>0$. By Lusin's Theorem \cite[Th. 7.10]{Fol} for any $\eta>0$ there exists an open set
$U\subset I$, $m(U)<\eta$, and a continuous function $G \in C^0(I)$ such that $g=G$ on $I\setminus U$ and $\sup |G|\le \|g\|_\infty$.
The Frobenius-Perron operator is a conjugate of the Koopman operator, that is for any $f\in L^1$ and any $g\in L^\infty$, we have $\int_I P_\tau f\cdot g \, dm=\int_I  f\cdot g\circ \tau\,  dm$.
Therefore, we can write
\begin{equation*}\begin{split}
&\left|\int_I\left( P_\tau F\cdot g-P_{\tau_n} F\cdot g\right)\, dm\right|\le\int_I F\left| g\circ \tau-g\circ{\tau_n} \right|\, dm\\
&=\int_I F\left| g\circ \tau-G\circ \tau+G\circ \tau-G\circ \tau_n+G\circ \tau_n-g\circ{\tau_n} \right|\, dm\\
&\le \int_{\tau^{-1}(U)} F\left| g\circ \tau-G\circ \tau\right|\, dm+\int_I F\left| G\circ \tau_n+G\circ \tau_n\right|\, dm+\int_{\tau_n^{-1}(U)} F\left| g\circ \tau_n-G\circ \tau_n\right|\, dm .
\end{split}
\end{equation*}
Let $\sup G\le \|g\|_\infty =M_g$. Let $I_F(t)=\sup_{\{A: m(A)< t\}} \int_A |F| \, dm$. It is known that $I_F(t)\to 0$ as $t\to 0$.
Let $\omega_G$ be the modulus of continuity of $G$: $\omega_G(t)=\sup_{|x-y|\le t}|G(x)-G(y)|$. Again, $\omega_G(t)\to 0$ as $t\to 0$.
Using estimate (\ref{preimage_est}) we  obtain
\begin{equation}\label{estimate P}\begin{split}
&\left|\int_I\left( P_\tau F\cdot g-P_{\tau_n} F\cdot g\right)\, dm\right|\\
&\le 2M_g I_F\left(\frac{q_u}{s_u}\eta\right)+\omega_G(\sup|\tau_n-\tau|)+ 2M_g I_F\left(\frac{q_u}{s_u}\eta\right)\\
&=\omega_G(\|\tau_n-\tau\|_\infty)+4M_g I_F\left(\frac{q_u}{s_u}\eta\right).
\end{split}
\end{equation}
Let us fix an $\eps>0$.
Since $\|\tau_n-\tau\|_\infty\to 0$, as $n\to\infty$ we can find $N\ge 1$ such that for all $n\ge N$ we have $\omega_G(\|\tau_n-\tau\|_\infty)<\eps$. We can also find an $\eta>0$ such that
$4M_g I_F\left(\frac{q_u}{s_u}\eta\right)<\eps.$
 This shows that
$(P_{\tau_n}F-P_\tau F)\to 0$ weakly in $L^1$, as $n\to\infty$. Note, that this convergence is uniform over precompact subsets of $L^1$, since the estimate
(\ref{estimate P}) can be made common for all $F$ in  such a set (the functions in a precompact set are uniformly integrable).

Let $\{f_{n_k}\}_{k\ge 1}$ be a subsequence of $\{f_{n}\}_{n\ge 1}$ convergent in $L^1$ to $f^*$. To simplify the notation we will skip the subindex $k$.
We will show that $f^*$ is the density of an acim of $\tau$, i.e., $P_\tau f^*=f^*$. We have
\begin{equation*}\begin{split}
P_\tau f^*=P_\tau(\lim_{n\to \infty} f_n)=\lim_{n\to \infty}P_\tau f_n.
\end{split}
\end{equation*}
We will show that $P_\tau f_n-f_n$ converges weakly in $L^1$ to 0.
Let $\phi_i=P_{\tau_{(1,i)}} f$, $i=1,2,\dots$. Then, $f_n=\frac 1n\left(\phi_1+\phi_2+\dots+\phi_{n-1}+\phi_n\right)$. We can write
\begin{equation*}\begin{split}
&P_\tau f_n-f_n=\frac 1n\left(P_\tau\phi_1+P_\tau\phi_2+P_\tau\dots+P_\tau\phi_{n-1}+P_\tau\phi_n\right)-
\frac 1n\left(\phi_1+\phi_2+\dots+\phi_{n-1}+\phi_n\right)\\
&=\frac 1n\left(P_\tau\phi_n-\phi_1\right)+\frac 1n \sum_{i=1}^{n-1}\left(P_\tau \phi_i-\phi_{i+1}\right)
=\frac 1n\left(P_\tau\phi_n-\phi_1\right)+\frac 1n \sum_{i=1}^{n-1}\left(P_\tau \phi_i-P_{\tau_{i+1}}\phi_{i}\right).
\end{split}
\end{equation*}
 Let $I_\Phi$ be a common $I_F$ function for all $\phi_i$'s. Let $N$ and $\eta$ be chosen as above. Let $n\ge N+2$. Then, using  estimate (\ref{estimate P}), we have
\begin{equation*}\begin{split}
&\left|\int_I \left(P_\tau f_n-f_n\right) g\, dm\right|\\
&\le
\frac 1n\int_I\left|\left(P_\tau\phi_n-\phi_1\right) g\right|\, dm +\frac 1n \sum_{i=1}^{N}\int_I\left|\left(P_\tau \phi_i-P_{\tau_{i+1}}\phi_{i}\right)g\right|\, dm\\
&\hskip 4 cm +\frac 1n \sum_{i=N+1}^{n-1}\int_I\left|\left(P_\tau \phi_i-P_{\tau_{i+1}}\phi_{i}\right)g\right|\, dm\\
&\le \frac 2n M_g +\frac 2n N M_g+\frac {n-1-N}n \left(2\eps\right).
\end{split}
\end{equation*}
As $n\to \infty$ the right hand side becomes smaller than say $3\eps$. Since $\eps>0$ is arbitrary this proves that
$P_\tau f_n-f_n$ converges weakly in $L^1$ to 0 and $P_\tau f^*=f^*$.
\end{proof}

\section{Appendix} Here we collect the results about finitely additive measures necessary for the proof of Theorem
\ref{STTh}

\begin{lemma}\label{Lem0}{\rm [Theorem 1.22  of \cite{Yos}]}
Let $(X,\mathcal B)$ be a compact measure space. Let the measure $\eta$ be purely finitely
additive and $\eta\ge 0$. Let $\kappa$ be a countably additive measure defined on $(X,\mathcal B)$ such that
$\kappa\ge 0$. Then, there exists a decreasing sequence $\{E_n\}\subset\mathcal B$ such that $\lim_{n\to\infty}\kappa(E_n) = 0$
and $\eta(E_n) = \eta(X)$ for all $n\ge 1$. Conversely, if $kappa$ is a measure and the above conditions hold for
all countably additive $\kappa$, then $\eta$ is purely finitely additive.
\end{lemma}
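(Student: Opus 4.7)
The statement is Yosida and Hewitt's characterization of purely finitely additive measures, so the plan is to handle the two directions separately, starting with the easier converse.

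For the converse, assume the conclusion holds for every countably additive $\kappa\ge 0$. To show $\eta$ is purely finitely additive, let $\nu$ be any countably additive measure with $0\le\nu\le\eta$ and apply the hypothesis with $\kappa:=\nu$: one obtains decreasing $E_n$ with $\nu(E_n)\to 0$ and $\eta(X\setminus E_n)=0$. Dominance $\nu\le\eta$ then forces $\nu(X\setminus E_n)=0$, so $\nu(X)=\nu(E_n)\to 0$, i.e., $\nu\equiv 0$.

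For the forward direction, fix $\kappa$ countably additive and consider the filter $\mathcal F:=\{E\in\mathcal B:\eta(X\setminus E)=0\}$, which is closed under finite intersection (by finite subadditivity of $\eta$ on complements) and under supersets. I would reduce the problem to proving $\rho:=\inf_{E\in\mathcal F}\kappa(E)=0$: once this is shown, choosing $E_n\in\mathcal F$ with $\kappa(E_n)<1/n$ and setting $F_n:=\bigcap_{k\le n}E_k\in\mathcal F$ yields a decreasing sequence with $\kappa(F_n)\to 0$ and $\eta(F_n)=\eta(X)$ for every $n$.

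To establish $\rho=0$ I would argue by contradiction. Suppose $\rho>0$ and take a decreasing minimizing sequence $F_n\in\mathcal F$ with $\kappa(F_n)\searrow\rho$ (obtained by intersecting initial segments of any minimizing sequence); let $F:=\bigcap_n F_n$, so $\kappa(F)=\rho$ by continuity of $\kappa$. For any $G\in\mathcal B$ with $G\subseteq F$ and $\eta(G)=0$, the set $F_n\setminus G$ still lies in $\mathcal F$, so $\kappa(F_n\setminus G)\ge\rho$, whence $\kappa(F_n\cap G)\le\kappa(F_n)-\rho\to 0$; since $F_n\cap G\searrow G$, countable additivity then yields $\kappa(G)=0$. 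Thus $A\mapsto\kappa(A\cap F)$ is a nonzero countably additive measure on $\mathcal B$ that is absolutely continuous with respect to $\eta$ on subsets of $F$.

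The main obstacle is the final step: passing from this absolute continuity to a nonzero countably additive $\nu$ with $\nu\le\eta$, which would contradict the pure finite additivity of $\eta$. Radon--Nikodym is not directly available since $\eta$ itself is only finitely additive; the standard resolution applies a Hahn-type decomposition to the bounded finitely additive signed measure $A\mapsto\eta(A)-\varepsilon\kappa(A\cap F)$ for small $\varepsilon>0$ and extracts its positive part as the desired countably additive measure dominated by $\eta$. As this amounts to reproving Theorem 1.22 of Yosida, I would simply cite that reference in the write-up rather than reproduce the full Yosida--Hewitt decomposition.
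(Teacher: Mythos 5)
The paper does not actually prove this lemma: it is quoted as Theorem 1.22 of \cite{Yos} and used as a black box, so your decision to cite that reference for the hard step is, in the end, exactly what the authors do. Within your sketch, the converse direction is correct and complete, and in the forward direction the reduction to showing $\rho=\inf\{\kappa(E):\eta(X\setminus E)=0\}=0$, together with the construction of the nonzero countably additive measure $\lambda(A)=\kappa(A\cap F)$ vanishing on $\eta$-null sets, is sound.

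However, the step you flag as ``the main obstacle'' is a genuine gap, and the fix you sketch would not close it. The positive part of $\eta-\varepsilon\lambda$ is indeed dominated by $\eta$, but there is no reason for it to be countably additive (for small $\varepsilon$ it is close to $\eta$ itself); the natural countably additive object one extracts from that decomposition is $(\varepsilon\lambda)\wedge\eta=\varepsilon\lambda-(\varepsilon\lambda-\eta)^{+}$, and this is \emph{automatically} zero whenever $\eta$ is purely finitely additive, so it cannot produce a contradiction. More seriously, the statement you reduce to --- that a nonzero countably additive measure vanishing on all $\eta$-null sets is incompatible with $\eta$ being purely finitely additive --- is essentially equivalent to the lemma itself (it follows from the lemma in two lines via your converse argument), so the reduction does not lower the difficulty. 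The real content of the Yosida--Hewitt proof is upgrading the approximate orthogonality $\inf_A\bigl(\lambda(A)+\eta(X\setminus A)\bigr)=0$ to a decreasing sequence of sets whose $\eta$-complement is exactly zero; the naive union/intersection argument fails because $\eta$, being only finitely additive, is not countably subadditive. You should therefore either reproduce their argument in full or present the entire lemma as a citation, as the paper does, rather than as a proof whose final step is circular.
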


\begin{lemma} \label{Lem1}{\rm [Theorems 1.23 and 1.24 of \cite{Yos}]} Let $\eta$ be a measure such that $\eta\ge 0$. Then there exist unique measures $\eta_p$ and
$\eta_c$ such that $\eta_p\ge 0$,
$\eta_c\ge 0$, $\eta_p$  is purely finitely additive, $\eta_c$ is countably additive and
$$\eta=\eta_p+\eta_c.$$
\end{lemma}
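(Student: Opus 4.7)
The plan is to construct $\eta_c$ as the largest countably additive minorant of $\eta$, then define $\eta_p := \eta - \eta_c$ and verify via Lemma \ref{Lem0} that it is purely finitely additive. Uniqueness will fall out of the same characterization.

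Let $\mathcal{M}_c$ denote the collection of non-negative countably additive set functions $\mu$ on $\mathcal{B}$ satisfying $\mu(E)\le\eta(E)$ for every $E\in\mathcal{B}$. My first step is to verify that $\mathcal{M}_c$ is upward directed: for $\mu_1,\mu_2\in\mathcal{M}_c$, the lattice join defined by $(\mu_1\vee\mu_2)(E)=\sup\{\mu_1(A)+\mu_2(E\setminus A):A\in\mathcal{B},\,A\subseteq E\}$ is again countably additive and dominated by $\eta$ (this uses finite additivity of $\eta$ on the partition $E=A\cup(E\setminus A)$). Because $\mu(X)\le\eta(X)<\infty$ for every $\mu\in\mathcal{M}_c$, the supremum $s:=\sup\{\mu(X):\mu\in\mathcal{M}_c\}$ is finite, and I extract an increasing sequence $\mu_n\in\mathcal{M}_c$ with $\mu_n(X)\uparrow s$. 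Setting $\eta_c(E):=\lim_n\mu_n(E)$, a monotone-convergence argument shows $\eta_c$ is countably additive, lies in $\mathcal{M}_c$, and is maximal there.

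Next, let $\eta_p:=\eta-\eta_c$. This is non-negative and finitely additive. To see it is \emph{purely} finitely additive, suppose $\kappa\ge 0$ is countably additive with $\kappa\le\eta_p$. Then $\eta_c+\kappa$ is countably additive and bounded above by $\eta$, so $\eta_c+\kappa\in\mathcal{M}_c$; maximality of $\eta_c$ forces $\kappa=0$. Hence $\eta_p$ admits no nonzero countably additive minorant, and the converse direction of Lemma \ref{Lem0} gives that $\eta_p$ is purely finitely additive. For uniqueness, suppose $\eta=\mu+\nu$ is any other decomposition of the required form. Since $\mu\in\mathcal{M}_c$, we have $\mu\le\eta_c$; the difference $\eta_c-\mu$ is countably additive, non-negative, and bounded above by $\eta-\mu=\nu$, so by pure finite additivity of $\nu$ (via Lemma \ref{Lem0}) we conclude $\eta_c-\mu=0$, and thus $\mu=\eta_c$ and $\nu=\eta_p$.

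The main obstacle I anticipate is showing that the supremum $s$ is actually realized by a single countably additive measure $\eta_c\in\mathcal{M}_c$, rather than only by a sequence. This requires first establishing the lattice structure of $\mathcal{M}_c$ and then performing a careful monotone-limit argument on an increasing sequence extracted from a maximizing one: a naive pointwise sup of an arbitrary family need only be countably subadditive, so the directedness plus uniform boundedness by $\eta(X)$ are essential to guarantee that the increasing limit genuinely preserves countable additivity and attains the sup.
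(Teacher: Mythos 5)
Your argument is correct and is essentially the proof of Theorems 1.23 and 1.24 of \cite{Yos} that the paper cites in lieu of giving its own proof: constructing $\eta_c$ as the largest countably additive minorant of $\eta$ (via the lattice join, directedness, and a maximizing increasing sequence) is precisely the characterization the paper records separately as Lemma \ref{Lem2}, and your maximality arguments for pure finite additivity of $\eta_p$ and for uniqueness are the standard ones. The only quibble is that your appeal to the converse direction of Lemma \ref{Lem0} is superfluous and not literally applicable as stated (you never produce the decreasing sequences $\{E_n\}$ that its hypothesis requires); what you actually verify --- that $\eta_p$ admits no nonzero countably additive minorant --- is already the definition of purely finitely additive used in this paper (Definition 1.13 of \cite{Yos}, as invoked in the proof of Lemma \ref{Lem3}), so the conclusion follows directly.
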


\begin{lemma}\label{Lem2}{\rm [Contained in the proof of Theorem 1.23  of \cite{Yos}]}
Let $\eta$ be a measure decomposed as $\eta=\eta_p+\eta_c.$. Then, $\eta_c$ is the greatest of
the measures $\kappa$, such that $0 \le \kappa\le\eta$.
\end{lemma}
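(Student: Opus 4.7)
The statement must mean that $\eta_c$ is the greatest \emph{countably additive} measure $\kappa$ with $0 \le \kappa \le \eta$, since otherwise $\eta$ itself would trivially be the largest candidate. Note that $\eta_c$ is itself such a candidate: it is countably additive by hypothesis and $\eta_c = \eta - \eta_p \le \eta$ because $\eta_p \ge 0$. So the real content is showing that no countably additive $\kappa \le \eta$ exceeds $\eta_c$ on any set.

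My plan is to exploit Lemma \ref{Lem0} to split $X$ into a set that carries all of $\eta_p$ but almost none of $\kappa$, and its complement on which $\eta_p$ vanishes (so the bound $\kappa \le \eta$ collapses to $\kappa \le \eta_c$). Concretely, let $\kappa$ be countably additive with $0 \le \kappa \le \eta$. Apply Lemma \ref{Lem0} to the pair $(\eta_p, \kappa)$ --- note the sequence is allowed to depend on the specific $\kappa$ --- to obtain a decreasing sequence $\{E_n\} \subset \mathcal B$ with $\kappa(E_n) \to 0$ and $\eta_p(E_n) = \eta_p(X)$. By finite additivity of $\eta_p$ on the partition $X = E_n \sqcup (X \setminus E_n)$, this forces $\eta_p(X \setminus E_n) = 0$.

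Now, for any $A \in \mathcal B$, split $\kappa(A) = \kappa(A \cap E_n) + \kappa(A \setminus E_n)$. The first term is dominated by $\kappa(E_n)$ and hence tends to $0$. For the second term, monotonicity and $\kappa \le \eta$ give
$$\kappa(A \setminus E_n) \le \eta(A \setminus E_n) = \eta_c(A \setminus E_n) + \eta_p(A \setminus E_n) \le \eta_c(A) + 0,$$
using $\eta_p(A \setminus E_n) \le \eta_p(X \setminus E_n) = 0$. Letting $n \to \infty$ yields $\kappa(A) \le \eta_c(A)$, and since $A$ was arbitrary, $\kappa \le \eta_c$.

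The main obstacle is mostly a matter of correct bookkeeping: one must resist the temptation to read Lemma \ref{Lem0} as providing a sequence $\{E_n\}$ uniform over all countably additive measures, and instead apply it to the specific $\kappa$ under consideration. An alternative route is via the uniqueness statement in Lemma \ref{Lem1}: decompose $\eta - \kappa = (\eta-\kappa)_c + (\eta-\kappa)_p$, observe that $\kappa + (\eta-\kappa)_c$ is countably additive while $(\eta-\kappa)_p$ is purely finitely additive, and conclude $\eta_c = \kappa + (\eta-\kappa)_c \ge \kappa$ by uniqueness. I prefer the first route since it uses only Lemma \ref{Lem0} directly and makes the role of the ``small in $\kappa$, full in $\eta_p$'' sequence transparent.
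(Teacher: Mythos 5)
Your proof is correct. Note that the paper itself offers no proof of this lemma --- it is stated with only the citation ``[Contained in the proof of Theorem 1.23 of \cite{Yos}]'' --- so there is nothing to compare against except the original reference, and your argument is essentially the one found there. Your reading of the statement (greatest \emph{countably additive} $\kappa$ with $0\le\kappa\le\eta$, since otherwise $\eta$ itself would trivially dominate) is the intended one. The main argument via Lemma \ref{Lem0} applied to the pair $(\eta_p,\kappa)$ is sound: the sequence $\{E_n\}$ soaks up all of $\eta_p$ while $\kappa(E_n)\to 0$, so splitting an arbitrary $A$ along $E_n$ gives $\kappa(A)\le\kappa(E_n)+\eta_c(A)$ and hence $\kappa\le\eta_c$ in the limit. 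The alternative route you sketch --- decomposing $\eta-\kappa\ge 0$ by Lemma \ref{Lem1} and invoking uniqueness to identify $\eta_c=\kappa+(\eta-\kappa)_c\ge\kappa$ --- is equally valid and arguably shorter; either would serve.
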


\begin{lemma} \label{Lem3} If $\eta$ is a non-negative finitely additive measure and $$\int_X g d\eta=0,$$
for any continuous function on $X$, then $\eta$ is purely finitely additive measure.
\end{lemma}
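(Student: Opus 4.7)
My plan is to prove the statement directly from compactness. Since $X$ is compact, the constant function $g \equiv 1$ belongs to $C(X)$, so applying the hypothesis at $g \equiv 1$ immediately forces $\eta(X) = \int_X 1 \, d\eta = 0$. Because $\eta$ is a non-negative finitely additive measure of total mass zero, monotonicity ($0 \leq \eta(A) \leq \eta(X) = 0$ for every measurable $A$) shows that $\eta$ is identically zero on the $\sigma$-algebra. I would write this as the first, and essentially only, step of the proof.

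The second step is to verify that the zero measure is purely finitely additive in the Yosida--Hewitt sense of Lemma \ref{Lem1}. By Lemma \ref{Lem2}, the countably additive component $\eta_c$ is characterized as the greatest countably additive measure $\kappa$ with $0 \leq \kappa \leq \eta$; with $\eta = 0$ this forces $\eta_c = 0$, and the uniqueness of the decomposition in Lemma \ref{Lem1} gives $\eta = \eta_p$ as required. So the literal statement of the lemma amounts to a few lines once the constant test function is noticed.

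The genuine obstacle lies in adapting this argument to the form in which the lemma is actually invoked in the proof of Theorem \ref{STTh}: there it is applied to the signed finitely additive measure $\kappa = z - z \circ \tau^{-1}$ to conclude that its countably additive component vanishes, and the trick of plugging in $g \equiv 1$ only yields $\kappa(X) = 0$, which is not enough. The hard part will be showing that the purely finitely additive component of such a signed $\kappa$ also integrates every continuous function to zero, for once this is in hand the identity $0 = \int g \, d\kappa = \int g \, d\kappa_c + \int g \, d\kappa_p$ combined with the Riesz representation theorem on the compact metric space $X$ forces $\kappa_c = 0$. My plan for this extension would be to apply the Jordan decomposition $\kappa = \kappa^+ - \kappa^-$, then Yosida--Hewitt (Lemma \ref{Lem1}) to each summand, and finally to exploit Lemma \ref{Lem0} together with the non-singularity and decay hypothesis of Theorem \ref{STTh} to locate the shrinking sets on which the purely finitely additive mass of $z_p$ is concentrated and to control the behaviour of $z_p \circ \tau^{-1}$ on those sets; the literal non-negative form stated in Lemma \ref{Lem3} is then recovered as an immediate special case.
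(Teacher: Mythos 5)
Your first two paragraphs give a correct proof of the statement as written, by a genuinely different and more elementary route than the paper's: testing the hypothesis against $g\equiv 1$ forces $\eta(X)=0$, hence $\eta=0$ by monotonicity, and the zero measure is trivially purely finitely additive (via Lemma \ref{Lem2} and the uniqueness in Lemma \ref{Lem1}, as you say). The paper never takes this shortcut. It instead verifies Definition 1.13 of Yosida--Hewitt directly: any countably additive $\kappa$ with $0\le\kappa\le\eta$ satisfies $0\le\kappa(g)\le\eta(g)=0$ for every non-negative continuous $g$ (hence for every continuous $g$ by linearity), and a countably additive Borel measure on a compact metric space annihilating $C^0(X)$ is zero. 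Your argument is shorter and has the merit of exposing that the hypothesis is only ever satisfied by $\eta=0$; the paper's argument has the merit that its template is the one the authors actually want when the lemma is invoked for the signed measure $\kappa=z-z\circ\tau^{-1}$ in the proof of Theorem \ref{STTh}.

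Your third paragraph correctly identifies that invocation as the real issue, but be warned that the extension you sketch cannot be proved in the generality in which you state it: it is false that the purely finitely additive part of a signed finitely additive measure annihilating $C^0(X)$ must itself annihilate $C^0(X)$. For example, on $X=[0,1]$ let $\mu$ be the $0$--$1$ valued finitely additive measure given by a generalized (ultrafilter) limit along the points $1/n$; one checks with Lemma \ref{Lem0} that $\mu$ is purely finitely additive, yet $\int g\,d\mu=g(0)$ for every continuous $g$. Thus $\lambda=\mu-\delta_0$ annihilates $C^0(X)$ while its countably additive component is $-\delta_0\neq 0$. Any correct treatment of the signed case must therefore bring in the additional structure of Theorem \ref{STTh} (absolute continuity of $z$ with respect to $\nu$, non-singularity of the maps) to exclude examples of this type; your plan names the right ingredients but does not yet show how they do so, and that is where the genuine work lies.
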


\begin{proof}  According to the Definition 1.13 of \cite{Yos} we have to show that any countably additive measure
$\kappa$ satisfying 
 \begin{equation} \label{eq1}
0\le\kappa\le \eta
\end{equation}
 is a zero measure. Let $\kappa$ satisfy (\ref{eq1}). Then for any continuous function $g$, we have
\begin{equation*} 
0\le\kappa(g)\le \eta(g)=0.
\end{equation*}
Therefore $\kappa(g)=0$ for all continuous functions $g$. Since $\kappa$ is a countably additive measure,  $\kappa=0$.
\end{proof}

\textbf{Acknowledgments:} The authors are grateful to the anonymous reviewer for his very detailed comments which helped to improve the paper.

\end{document}